\theoremstyle{plain}
\newtheorem{Thm}{Theorem}
\newtheorem{Lem}[Thm]{Lemma}
\newtheorem{Def}[Thm]{Definition}
\begin{document}

\title[Liouville theorems and Fujita exponent]
{ Liouville theorems and Fujita exponent for nonlinear space fractional diffusions}

\author{Li Ma}
\address{Li Ma, School of Mathematics and Physics \\
  University of Science and Technology Beijing \\
  30 Xueyuan Road, Haidian District
  Beijing, 100083\\
}

\address{ Department of mathematics \\
Henan Normal university \\
Xinxiang, 453007 \\
China}

\email{lma@tsinghua.edu.cn}

\thanks{The research is partially supported by the National Natural Science
Foundation of China (No.11271111)}

\begin{abstract}
We consider non-negative solutions to the semilinear space-fractional diffusion problem $(\partial_t+(-\Delta)^{\alpha/2})u=\rho(x)u^p$ on whole space
$R^n$ with nonnegative initial data and with $(-\Delta)^{\alpha/2}$ being the $\alpha$-Laplacian operator, $\alpha\in (0,2)$. Here $p>0$ and $\rho(x)$ is a non-negative locally integrable function. For $\rho(x)=1$ we show that the fujita exponent is $p_F=1+\frac{\alpha}{n}$ and the Liouville type result for the stationary equation is true for $0<p\leq 1+\frac{\alpha}{n-\alpha}$. When $p=1/2$ and $\rho(x)$ satisfies an integrable condition, there is at least one positive solution. This existence result is proved after we establish a uniqueness result about solutions of fractional Poisson equation.

{ \textbf{Mathematics Subject Classification 2000}:26A33 35K55, 35B45 35B53 35J61}

{ \textbf{Keywords}: diffusion, fractional derivative, Liouville theorem, Fujita exponent, uniqueness }
\end{abstract}

 \maketitle

\section{Introduction}
In this paper we consider blow-up property of the nonlinear fractional space-diffusion evolution equation
\begin{equation}\label{diffusion}
D_{0+}^\tau h(u)+(-\Delta)^{\alpha/2} u=f(x,u) \ \ \ in \ \ R^n\times (0,T),\ \ \ T>0
\end{equation}
and existence and non-existence results of positive solutions of some of its stationary case. Here $\tau>0$ and $0<\alpha<n$, $f$ is a nonnegative function and the problem is assumed to have reasonable non-negative initial data $u(x,0)$. The behavior of the problem (\ref{diffusion}) depends heavily on the fractional orders and the shape of the nonlinearity term $f(x,u)$. We shall consider only some special cases.
This kind of fractional-order diffusion equation models arises naturally from Anomalous diffusion processes in complex media. The time derivative term corresponds to long-time heavy tail decay and the spatial derivative for diffusion nonlocality \cite{AK} \cite{MLP} \cite{GM} \cite{MVV}. Fractional operators often cause more difficulty because of this nonlocallity.

When there is no space variables, the problem is reduced to
$$
D_{0+}^\tau h(u)=f(x,u) \ \ \ in \ \ t\in (0,T),\ \ \ T>0
$$
One may refer to \cite{BCLS} for surprising results when $\tau\in (1,2)$ and the article \cite{ABB} for a survey.

When $D_{0+}^\tau h(u)$ in (\ref{diffusion}) is replaced by the Caputo derivative $\mathbb{D}^\tau_t$, the problem is largely open and need investigation. M.Allen, L.Caffarelli, and A.Vasseur \cite{ACV} have obtained the interesting existence and regularity of a weak solution of the following fractional Porous-media flow problem
$$
\mathbb{D}^\tau_t u+(-\Delta)^{\alpha/2} u=f(x,u) \ \ \ in \ \ R^n\times (0,T),\ \ \ T>0.
$$

Recall that that for $\tau>0$, $D_{0+}^\tau h(t)$ is the Riemann-Liouville derivative of the function $h(t)$ \cite{SKM} \cite{CM} defined by
$$
D_{0+}^\tau h(t)=\frac{d^k}{dt^k}[I^{k-\tau}_{0+}h(t)]
$$
where $k=[\tau]+1$ and
$$
I^{k-\tau}_{0+}h(t)=\frac{1}{\Gamma(k-\tau)}\int_0^t\frac{h(s)}{(t-s)^{1-k+\tau}}ds
$$
is the Riemann-Liouville integral of the function $h(t)$ in $(0,T)$.
Formally, for $u\in L_{loc}(R^n)$, $(-\Delta)^{\alpha/2}u$ is the $\alpha$-Laplacian operator of the function $u:R^n\to R$ is in the distributional sense or is defined by
$$
(-\Delta)^{\alpha/2}u=C_{n,\alpha}P.V. \int_{R^n}\frac{u(x)-u(y)}{|x-y|^{n+\alpha}}dy,
$$
where $u\in C^{1,1}_{loc}(R^n)$ plus an weighted integrable condition, P.V. means the principal value of the integral under consideration.
For precise definition of latter $\alpha$-Laplacian operator of the regular functions $u:R^n\to R$, one may refer to \cite{CLL} for the precise expression for the constant $C_{n,\alpha} $.

We shall consider the important cases of (\ref{diffusion}) when $\tau=1$, $0<\alpha<2$, $h(u)=u$, $f(x,u)=\rho(x) u^p$ where $p>0$. Then the evolution equation is reduced to
\begin{equation}\label{diffusion2}
\partial_t u+(-\Delta)^{\alpha/2} u=\rho(x)u^p \ \ \ in \ \ R^n\times (0,T),\ \ \ T>0
\end{equation}
where $\rho(x)$ is a nontrivial nonnegative locally integrable function on $R^n$. It is well-known that the problem is locally well-posed in the function space $L^1\bigcap L^\infty(R^n)$ for $1<p<\infty$ (see \cite{QP} for classical case when $\alpha=2$). We shall look for the Fujita exponent in this function space. We may further reduce the problem by using the semigroup \cite{P} generated by $\alpha$-Laplacian operator. So it is quite interesting for us to consider the Fujita exponent of the nonlocal nonlinear fractional equation (\ref{diffusion2}). We show in section \ref{sect5} below that $p=p_F:==1+\frac{\alpha}{n}$ is the Fujita exponent of (\ref{diffusion2}) with $\rho(x)=1$. When $\alpha=2$, this is the classical result of Fujita \cite{F66} and Weissler \cite{W81}. When $p>p_F$ and $\alpha=2$, one may see deep results about global existence of the problem (\ref{diffusion2}) and symmetry result of positive solutions of its stationary version in the survey paper of Prof. W.M.Ni \cite{N86}. For our case when $p>p_F$, we can settle up global existence result of the flow and we shall make it appear somewhere.

When $0<\alpha<2$, we need to find physically more interesting solutions such as ground state solutions (i.e., positive solution with minimum energy) to (\ref{diffusion2}). One may consult interesting paper \cite{GLM} for Liouville properties of positive solutions to the stationary version of (\ref{diffusion2}):
\begin{equation}\label{ell}
(-\Delta)^{\alpha/2} u=\rho(x)u^p \ \ \ in \ \ R^n.
\end{equation}
When $\phi(x)=0$, Chen, D'Ambrosio, and Li \cite{CDL} find very useful Liouville theorem, and their result will be used by us in section \ref{sect2}. In \cite{JW}, the authors have developed a direct method of moving planes to nonlocal problems of variable order (which enclose the problem (\ref{ell}) as a special case) in bounded domains.
Chen, Li and Li \cite{CLL} have devised
a direct method of moving planes to study the symmetry property of positive solutions to the problem (\ref{ell}) provided the function $\rho(x)$ enjoys the symmetry and $p>1$. Chen, Li, and Li \cite{CLL2017} have further developed this kind of method applied to fully nonlinear fractional problems. Based on the work \cite{CLL}, we have developed in \cite{LM} the direct method of moving planes to positive solutions of nonlinear fractional elliptic system. Li \cite{Li} applies this method to (\ref{ell}) in unbounded parabolic domains.

When $p=0$ in (\ref{ell}), the problem is reduced to the classical Poisson equation and we shall show in section \ref{sect5} that there is a sufficient and necessary condition of solvability of Poisson equation for the problem (\ref{ell}) when $p=1/2$. We leave the general case when $p\in (0,1/2)\bigsqcup (1/2,1)$ open. We point out that when $\alpha=2$, this problem has been studied by H.Brezis and Kamin \cite{BK}.

As in the classical case when $\alpha=2$, the exponent $p_{sg}=\frac{n}{n-\alpha}$ plays a special role in understanding the Liouville property of (\ref{ell}) for $\alpha\in (0,2)$, $0<p\leq p_{sg}$ and $\rho(x)=1$. This is done in section \ref{sect4}. This kind results are even true for higher order differential operators and for elliptic systems, see \cite{LL}.

The plan od the paper is below. In section \ref{sect2}, we set up the uniqueness result for positive solutions to sublinear fractional Poisson equation in bounded domains. This uniqueness result is used to prove the existence of positive solutions to the nonlinear Poisson equation on whole space in section \ref{sect3}. It is here we show that the property the function $\rho(x)$ plays the role. In section \ref{sect4}, we prove the Liouville type theorem for the superlinear fractional Poisson equation for $0<p\leq 1+\frac{\alpha}{n-\alpha}$. In section \ref{sect5}, we make the fujita exponent from the nonlinear space-diffusion problem on whole space. We discuss possible solvable questions in section \ref{sect6}.

\section{Poisson equation of the fractional Laplacian}\label{sect2}
Let $\alpha\in (0,2)$. Define the function space
$$
\mathbf{L}_\alpha(R^n)=\{u\in L_{loc}(R^n); \int_{R^n}\frac{|u(x)|}{1+|x|^{n+\alpha}}dx<\infty\}.
$$
Clearly $L^1+L^\infty(R^n)\subset \mathbf{L}_\alpha(R^n)$.

 For $u\in \mathbf{L}_\alpha(R^n)\bigcap C^{1,1}_{loc}(R^n)$, we define
$$
Lu(x)=(-\Delta)^{\alpha/2}u=C_{n,\alpha}\lim_{\delta\to 0} \int_{R^n-B_\delta (x)}\frac{u(x)-u(y)}{|x-y|^{n+\alpha}}dy.
$$
Here $C_{n,\alpha}>0$ is the dimension constant.

Let $f$ be a locally bounded non-negative function in $R^n$, i.e, $f\in L^\infty_{loc}(R^n)$. We consider the solution to the Poisson equation
\begin{equation}\label{Pois}
Lu=f \ \ \ in \ R^n.
\end{equation}
\begin{Lem} The minimal nonnegative solution to (\ref{Pois}) is given by the formula
$$
c_{\alpha,n}\int_{R^n}\frac{f(y)}{|x-y|^{n-\alpha}}dy,
$$
where $c_{\alpha,n}>0$ is the normalized constant.
\end{Lem}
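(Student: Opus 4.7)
The plan is to proceed in two steps: first show that the Riesz potential
\[
U(x):=c_{\alpha,n}\int_{R^n}\frac{f(y)}{|x-y|^{n-\alpha}}dy
\]
is itself a nonnegative solution of (\ref{Pois}), and then establish minimality by a Perron-type comparison against an arbitrary nonnegative solution. For the first step I would invoke the classical fact that the Riesz kernel $G(x)=c_{\alpha,n}|x|^{-(n-\alpha)}$ is the fundamental solution of $(-\Delta)^{\alpha/2}$, so that $LG=\delta_{0}$ in the distributional sense; the normalizing constant $c_{\alpha,n}$ is pinned down by exactly this identity. A Fubini argument then shows that whenever $U$ is finite on a set of positive measure it automatically lies in $\mathbf{L}_\alpha(R^n)$, and convolution with the fundamental solution gives $LU=f$ distributionally (pointwise wherever $f$ has enough local regularity).

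For minimality, let $v\geq 0$ be any solution of (\ref{Pois}). I would truncate $f_k:=f\,\chi_{B_k}$ and solve the exterior Dirichlet problem on a larger ball $B_R\supset B_k$, producing $u_{R,k}$ with $Lu_{R,k}=f_k$ in $B_R$ and $u_{R,k}\equiv 0$ outside $B_R$. The Green-function representation $u_{R,k}(x)=\int_{B_R}G_R(x,y)f_k(y)\,dy$ shows that $u_{R,k}\geq 0$ and $u_{R,k}\in\mathbf{L}_\alpha(R^n)$. Setting $w:=v-u_{R,k}$, one has $Lw=f-f_k\geq 0$ in $B_R$ together with $w\geq 0$ outside $B_R$, so the maximum principle for the fractional Laplacian on bounded domains (as used in \cite{CLL,JW}) forces $u_{R,k}\leq v$ in all of $R^n$. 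Since the ball Green functions increase monotonically to the Riesz kernel as $R\to\infty$ and $f_k\nearrow f$, monotone convergence yields $u_{R,k}(x)\nearrow U(x)$, and hence $U\leq v$. Combined with the previous step, this identifies $U$ as the minimal nonnegative solution.

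The main obstacle I expect is making the nonlocal comparison rigorous: one needs a version of the maximum principle that tolerates the merely $\mathbf{L}_\alpha$ tail of $v$, because the exterior data $v|_{R^n\setminus B_R}$ need not be bounded, and one must ensure that the standard comparison still applies in this generality. A related subtlety is the justification that $L U=f$ when $f$ is only $L^\infty_{loc}$; my plan is to handle this by mollifying $f$ to $f_\epsilon\in C_c^\infty$, noting that the associated Riesz potentials $U_\epsilon$ solve $LU_\epsilon=f_\epsilon$ classically, and then passing to the monotone limit using the nonnegativity of the kernel, which dovetails with the approximation scheme already used for minimality.
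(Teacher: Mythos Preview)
Your proposal is correct and follows essentially the same route as the paper: solve the Dirichlet problem on balls $B_R$ via the Green function, use the maximum principle to compare with an arbitrary nonnegative solution, and pass to the limit $R\to\infty$ where $G_R^\alpha\nearrow c_{\alpha,n}|x-y|^{-(n-\alpha)}$ by monotone convergence. The paper's argument is slightly leaner in that it works directly with $f$ on $B_R$ (so that $L(v-u_R)=0$ rather than $\geq 0$) and does not separately verify $LU=f$ via the fundamental solution, but your additional truncation $f_k$ and mollification steps are harmless refinements of the same scheme.
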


Let $R>0$ and let $u_R$ be the solution of
\begin{align*}
Lu&=f \ \ in \ B_R \\
u&=0 \ \ on \ B_R^c.
\end{align*}
Let $G_R^\alpha(x,y)$ be the Green function on $B_R$. Then we have
$$
u_R(x)=\int_{B_R} G_R^\alpha(x,y)f(y)dy.
$$
Clearly by using the maximum principle we know that $u_R(x)$ is increasing in $R$. Let
$$
u_\infty(x)=\lim_{R\to\infty}u_R(x).
$$
Recall that the limit $G_\infty^\alpha(x,y)$ of $G_R^\alpha(x,y)$ is given by
$$
G_\infty^\alpha(x,y)=\frac{c_{\alpha,n}}{|x-y|^{n-\alpha}}.
$$
Then we have
 \begin{equation}\label{exp}
u_\infty(x)=c_{\alpha,n}\int_{R^n}\frac{f(y)}{|x-y|^{n-\alpha}}dy.
 \end{equation}
It is easy to see that either
$$
u_\infty(x)<\infty, \ \ \forall \ x \in R^n
$$
or
$$
u_\infty(x)=\infty, \ \ \forall \ x \in R^n.
$$
Assume that
$U$ be a bounded solution to (\ref{Pois}). By adding a constant, we may assume that $U$ is non-negative in $R^n$.
By the maximum principle, we know that
$u_R(x)\leq U(x)$ in $B_R$. hence
$$
U_\infty(x)=\frac{c}{|x|^{n-\alpha}}* f\leq U(x) \ \ in \  R^n.
$$
Using the Liouville type theorem of Chen et \cite{CDL}, we know that if $\lim_{x\to\infty}U(x)\to 0$, then
$U(x)=u_\infty(x)$ in $R^n$. This implies that any bounded non-negative solution to (\ref{Pois}) is the minimal solution expressed by
(\ref{exp}).

We now consider the uniqueness result of a large class of nonlinear fractional Laplacian equations on bounded domain.
Let $\Omega$ be a smooth bounded domain of $R^n$. Assume that $\rho\geq c>0$ for some constant $c$.
Let $\phi$ be a non-negative bounded function in $R^n$ and let $f:R_+\to R_+$ such that there are two constants $\sigma\in(0,1)$ and $c>0$ satisfying
$$
\overline{\lim}t^{-\sigma}f(t)=c.
$$
Assume that $u>0$ in $\Omega$ satisfies that
\begin{align}\label{uniq}
Lu&=\rho f(u) \ \ in \ \Omega \\
u&=\phi \ \ on \ \Omega^c.
\end{align}

\begin{Thm}\label{uniqueness} Under the assumptions above, the problem (\ref{uniq}) has unique solution.
\end{Thm}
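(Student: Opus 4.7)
The plan is to adapt the classical Brezis--Oswald uniqueness argument for sublinear elliptic equations to the fractional setting. Let $u_1,u_2>0$ be two solutions of (\ref{uniq}). Since both equal $\phi$ on $\Omega^c$, the ratios $u_2^2/u_1$ and $u_1^2/u_2$ still agree with $\phi$ on $\Omega^c$ (after a regularization $u_i\mapsto u_i+\varepsilon$ if needed at the boundary). The key tool will be a Picone-type inequality for the fractional Laplacian: for $u>0$ and $v\ge 0$ sharing the same exterior data,
\[
\mathcal{E}(u,\,v^2/u)\;\le\;\mathcal{E}(v,v),
\]
where $\mathcal{E}(w_1,w_2)=\tfrac{C_{n,\alpha}}{2}\iint_{R^n\times R^n}\frac{(w_1(x)-w_1(y))(w_2(x)-w_2(y))}{|x-y|^{n+\alpha}}\,dx\,dy$ is the bilinear form of $L$. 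This is the nonlocal analogue of the classical Picone identity and has been established by Brasco--Franzina and others.

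I would first verify, via the strong maximum principle for $L$, that $u_1,u_2$ are strictly positive on compact subsets of $\Omega$, so that the ratios above are admissible test functions. Testing the equation for $u_1$ against $u_2^2/u_1$ and applying the Picone inequality gives
\[
\int_\Omega \rho\,\frac{f(u_1)}{u_1}\,u_2^2\,dx\;\le\;\int_\Omega \rho\,f(u_2)\,u_2\,dx\;=\;\int_\Omega \rho\,\frac{f(u_2)}{u_2}\,u_2^2\,dx.
\]
The symmetric inequality with the roles of $u_1$ and $u_2$ swapped holds as well. Subtracting yields
\[
\int_\Omega \rho\Bigl(\frac{f(u_1)}{u_1}-\frac{f(u_2)}{u_2}\Bigr)(u_1^2-u_2^2)\,dx\;\ge\;0.
\]
The sublinearity hypothesis $\overline{\lim}\,t^{-\sigma}f(t)=c$ with $\sigma\in(0,1)$ forces $t\mapsto f(t)/t$ to be strictly decreasing on the bounded range of values taken by the $u_i$. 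Hence $(f(u_1)/u_1-f(u_2)/u_2)(u_1^2-u_2^2)\le 0$ pointwise, and combined with $\rho\ge c>0$ and the display above the integrand must vanish almost everywhere, giving $u_1\equiv u_2$ by strict monotonicity.

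The main technical obstacle is the rigorous justification of the Picone inequality in the non-homogeneous exterior setting: since $\mathcal{E}$ integrates over $R^n\times R^n$, the cross contributions from $\Omega\times\Omega^c$ involving the data $\phi$ must be shown to cancel on both sides, which requires the test function to genuinely match $\phi$ outside and $u_i$ to have a positive lower bound down to $\partial\Omega$. I would handle this by the regularization $u_i\to u_i+\varepsilon$, a truncation of the test function, and a dominated convergence passage to the limit; a Hopf-type boundary estimate for $L$ available in the nonlocal literature would supply the needed lower bound near $\partial\Omega$. A secondary point is ensuring that the pure growth condition $\overline{\lim}\,t^{-\sigma}f(t)=c$ entails strict monotonicity of $f(t)/t$; this may tacitly require that $f$ be concave or that $f(t)/t$ be a priori nonincreasing. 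As an alternative one could run a ``sliding'' argument: since sublinearity yields $f(\lambda u)\le\lambda f(u)$ for $\lambda>1$, so that $\lambda u_1$ is a supersolution, the infimum $\lambda_0=\inf\{\lambda\ge 1:\lambda u_1\ge u_2\}$ must equal $1$ by the strong maximum principle for $L$, forcing $u_2\le u_1$ and, by symmetry, equality.
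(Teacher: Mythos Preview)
Your principal route---the Brezis--Oswald/Picone argument---is a different strategy from the paper's, and the obstruction you yourself flag is decisive rather than secondary: the sole structural hypothesis on $f$ is $\overline{\lim}_{t\to 0^+} t^{-\sigma}f(t)=c$ for some $\sigma\in(0,1)$, which says nothing about monotonicity of $t\mapsto f(t)/t$ on the full range of the solutions. A function such as $f(t)=t^\sigma\bigl(2+\sin(1/t)\bigr)$ near $0$ meets the limsup condition while $f(t)/t$ oscillates, so the final sign step in your display fails. The Picone route thus proves the theorem only under a genuinely stronger assumption (concavity, or $f(t)/t$ nonincreasing) that the paper does not make.

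Your alternative sliding sketch is closer in spirit to what the paper actually does, but as you state it it again needs more than is assumed: you use $f(\lambda t)\le \lambda f(t)$ for all $\lambda>1$ and all $t$, which is a global sublinearity not implied by the limsup condition. The paper runs the sliding argument differently. One sets $\lambda_0=\sup\{\lambda\in[0,1]:\lambda w_1\le w_2\}$ and supposes $\lambda_0<1$. For small $\varepsilon>0$ the function $w_2-(\lambda_0+\varepsilon)w_1$ must have an interior negative minimum at some $x_\varepsilon\in\Omega$, where $L\bigl(w_2-(\lambda_0+\varepsilon)w_1\bigr)(x_\varepsilon)<0$. The key observation is that as $\varepsilon\downarrow 0$ the points $x_\varepsilon$ are forced toward $\partial\Omega$ at a place where the exterior datum $\phi$ vanishes, so $w_1(x_\varepsilon),\,w_2(x_\varepsilon)\to 0$ with $w_2/w_1\to\lambda_0$. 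Only in this asymptotic regime does the hypothesis bite: dividing the equation at $x_\varepsilon$ by $w_1(x_\varepsilon)^{\sigma}$ and passing to the limit using $\overline{\lim}_{t\to 0} t^{-\sigma}f(t)=c$ yields $0\ge c(\lambda_0^{\sigma}-\lambda_0)$, impossible for $\lambda_0\in(0,1)$. So the paper never needs global sublinearity of $f$---just its behaviour at $0$, coupled with the boundary mechanism that drives the touching point there. Your Picone approach, by contrast, compares $f(u_1)/u_1$ with $f(u_2)/u_2$ throughout $\Omega$ and therefore cannot do without the monotonicity.
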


\begin{proof}
Suppose we have two solutions $w_1$ and $w_2$. Define
$$
\mathbb{A}=\{\lambda\in [0,1]; \lambda w_1\leq w_2 \ \ in \ \Omega\}
$$
Clearly, $0\in \mathbb{A}$ and $\mathbb{A}$ contains a neighborhood of $0$.
We claim $\lambda=1\in \mathbb{A}$. Otherwise, we have
$$
\lambda_0=\sup \mathbb{A}<1.
$$
 Let $w=w_2-\lambda w_1$. Then $w>0$ in $\Omega$.
  Then we can derive that
 \begin{equation}\label{key}
 Lw=\rho[f(w_2)-\lambda_0f(w_1)] \ \ in \ \Omega
 \end{equation}
 and
$w=(1-\lambda_0)\phi\geq 0$ on $\Omega^c$.
Note that for $\epsilon>0$ small,
$w-\epsilon w_1=(1-\lambda_0-\epsilon)\phi\geq 0$ on $\Omega^c$.
and
\begin{equation}\label{key2}
L(\epsilon w_1)=\epsilon L w_1=\rho[\epsilon f(w_1)] \ \ in \ \Omega
\end{equation}
By the equations (\ref{key}) and ({key2}) we get
\begin{equation}\label{key3}
L_K(w-\epsilon w_1)= \rho[f(w_2)-(\lambda_0+\epsilon)f(w_1)]
\end{equation}

If $w-\epsilon w_1<0$ in $\Omega$, then $w-\epsilon w_1$ attains its global minimum point $x_\epsilon\in \Omega$ and at $x_\epsilon$,
$$
L_K(w-\epsilon w_1)< 0.
$$
Sending $\epsilon $ to zero we have at $x_0=\lim_{\epsilon\to 0} x_\epsilon\in \partial\Omega$,
$w(x_0)=0$. Then $w_2(x_0)=\lambda_0 w_1(x_0)$ and so $\phi(x_0)=0$.
 At $x_\epsilon$, we have $ \lambda_0\leq \frac{w_2}{w_1}\leq \lambda_0+\epsilon$ and then
 $$
 \lim_{\epsilon\to 0} \frac{w_2}{w_1}(x_\epsilon)=\lambda_0.
 $$
By (\ref{key3}) at $x_\epsilon$, we have
 $$
 0\geq \overline{\lim}_{\epsilon\to 0} w_1(x_\epsilon)^{-\sigma}[f(w_2)-(\lambda_0+\epsilon)f(w_1)](x_\epsilon),
 $$
 which gives $0\geq c[\lambda_0^\sigma-\lambda_0]$ which is impossible for $\lambda_0\in (0,1)$.
 Then we have that $\Lambda_0+\epsilon \in \mathbb{A}$, which is impossible to the definition of $\lambda_0$. So we have $\lambda_0=1$.

This completes the proof of Theorem \ref{uniqueness}.
\end{proof}

The typical examples in application of Theorem \ref{uniqueness} are $f(u)=u^{\beta}$ and $f(u)=u^{\beta}(1-u)$. Since $u=0$ and $u=1$ are sub-solution and super solution respectively to the problem
$$
\partial_t u+(-\Delta)^{\alpha/2} u=\rho(x)u^{\beta}(1-u) \ \ \ in \ \ \Omega\times (0,T),\ \ \ T>0
$$
with initial data $u(x,0)\phi(x)\in [0,1]$, we can get a global solution to it. It is an interesting question to study the omega-limit of the flow above.

\section{Existence results for fractional Laplacian}\label{sect3}
Let $\alpha\in (0,2)$. Let $\rho(x)$ is nontrivial nonnegative function in $R^n$.

\begin{Def} We say that the function $\rho$ has the property (H) if the function
$U(x):=\int_{R^n}\frac{\rho(y)}{|x-y|^{n-\alpha}}dy$ is a bounded function in $R^n$.
\end{Def}
From the definition of $U$ we know that there is a dimension constant $c_\alpha>0$ such that
$$
(-\Delta)^{\alpha/2}U(x)=c_\alpha \rho(x) \ \ \ in \ R^n.
$$
In below, we may normalize $U$ by the constant $c_\alpha$ such that
$$
(-\Delta)^{\alpha/2}U(x)= \rho(x) \ \ \ in \ R^n.
$$

\begin{Thm}\label{existence} The problem
$$
(-\Delta)^{\alpha/2}u=\rho(x)\sqrt{u}
$$
has a bounded positive solution if and only if the function $\rho$ satisfies the property (H).

\end{Thm}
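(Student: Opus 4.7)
My plan is to reduce the PDE to the integral fixed-point equation $u=T(u)$, where
\[T(v)(x):=c_{\alpha,n}\int_{R^n}\frac{\rho(y)\sqrt{v(y)}}{|x-y|^{n-\alpha}}\,dy,\]
via the minimal-solution Lemma of Section~\ref{sect2}. The operator $T$ is monotone and enjoys the homogeneity $T(\lambda v)=\sqrt{\lambda}\,T(v)$ for $\lambda>0$; in particular $T(1)=U$ (after the normalization of $c_{\alpha,n}$), and with $M:=\|U\|_\infty$ the estimate $\sqrt{MU(y)}\le M$ gives $T(MU)\le MU$. Thus under hypothesis (H), $MU$ is a bounded super-solution.

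For the direction (H)$\Rightarrow$existence, I would first solve the approximating Dirichlet problem $Lu=\rho\sqrt{u}$ in $B_R$, $u=\epsilon$ outside, by the sub/super-solution method, with the constant $\epsilon$ as sub-solution and $(M+1)(U+\epsilon)$ as super-solution (the required inequality $(M+1)\rho\ge\rho\sqrt{(M+1)(U+\epsilon)}$ reduces to $M+1\ge U+\epsilon$, valid for $\epsilon\le 1$). Theorem~\ref{uniqueness} gives uniqueness of $u_R^\epsilon$, and monotonicity in $R$ follows by comparison, so $u^\epsilon:=\lim_R u_R^\epsilon$ is a bounded entire solution with $u^\epsilon\ge\epsilon$. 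Iterating the integral equation via the homogeneity of $T$ gives $u^\epsilon=T^k(u^\epsilon)\ge T^k(\epsilon)=\epsilon^{1/2^k}\,T^{k-1}(U)$ for every $k\ge 1$. Letting $k\to\infty$, the exponent $1/2^k\to 0$ so $\epsilon^{1/2^k}\to 1$, and one concludes $u^\epsilon\ge Z_\ast:=\lim_{k\to\infty}T^k(U)$, a nonnegative fixed point of $T$. Then $u^0:=\lim_{\epsilon\to 0}u^\epsilon$ is a bounded nonnegative entire solution with $u^0\ge Z_\ast$.

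For the direction existence$\Rightarrow$(H), let $u$ be a bounded positive solution with $\|u\|_\infty=M'$. Since $Lu=\rho\sqrt u\ge 0$, $u$ is $L$-superharmonic; by the strong maximum principle $u$ is either constant (ruled out because $u>0$ and $\rho\not\equiv 0$ would force $\rho\sqrt u=0$) or never attains its infimum on $R^n$. If $m:=\inf u>0$, then $\sqrt u\ge\sqrt m$ gives $Lu\ge\sqrt m\,\rho$, and the minimal-solution Lemma forces $u\ge c_{\alpha,n}\sqrt m\,U$, so $U\le M'/(c_{\alpha,n}\sqrt m)<\infty$, which is exactly (H). If $m=0$, I would combine the Liouville theorem of \cite{CDL}, applicable along any sequence where $u\to 0$, with the integral representation $u=T(u)$ and the local lower bounds $u\ge\delta_R>0$ on each ball $B_R$, and then pass to the limit carefully to upgrade these local bounds to a uniform bound on $U$.

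The main obstacle is the positivity of $Z_\ast=\lim_k T^k(U)$ in the sufficiency direction (which is what makes $u^0>0$), and analogously the case $m=0$ in the necessity direction: both amount to ruling out the collapse of the iterated sublinear operator $T$ to zero under hypothesis (H). A natural way to address this would be to rescale $u\mapsto\lambda u$ (which changes $\rho\mapsto\rho/\sqrt{\lambda}$) so that $M\le 1$, which makes the sequence $T^k(U)$ pointwise decreasing, and then exploit the strict positivity of $U$ together with the nonlocal smoothing of the Riesz kernel $|x-y|^{\alpha-n}$ to guarantee the limit is nontrivial. This delicate fixed-point analysis is the heart of the proof.
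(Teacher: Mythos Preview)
In the sufficiency direction your obstacle is self-inflicted. Once you have $u^\epsilon=\lim_{R\to\infty}u_R^\epsilon$, it is already a bounded positive entire solution: it satisfies the equation on every ball (hence on $R^n$), it dominates $\epsilon>0$ everywhere, and it is bounded above by $(M+1)(U+\epsilon)$. There is no need to send $\epsilon\to 0$, and the positivity of $Z_*$ never enters. The paper runs the same monotone scheme, taking a small multiple of the principal eigenfunction $\phi_R$ of the weighted problem $L\phi=\lambda\rho\phi$ on $B_R$ (with zero exterior data) as the subsolution and $CU$ as the supersolution, then invoking Theorem~\ref{uniqueness} to get monotonicity in $R$; your constant subsolution $\epsilon$ works just as well.

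The necessity direction, by contrast, has a genuine gap exactly where you flag it: for a bounded positive solution on $R^n$ there is no a~priori reason for $m=\inf u>0$, and your outline for the case $m=0$ is not a proof. The paper bypasses this case split entirely with a pointwise inequality for the fractional Laplacian: setting $v=\sqrt u$, one has
\[
2v(x)\,Lv(x)-L(v^2)(x)=C_{n,\alpha}\int_{R^n}\frac{(v(x)-v(y))^2}{|x-y|^{n+\alpha}}\,dy\ge 0,
\]
so that $\rho v = Lu = L(v^2)\le 2vLv$ and hence $2Lv\ge\rho$ wherever $v>0$. Thus $v=\sqrt u$ is itself a bounded supersolution of the linear equation $2Lw=\rho$, and the minimal-solution Lemma of Section~\ref{sect2} gives at once $U\le 2\sqrt u<\infty$, which is property~(H). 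No lower bound on $u$ is required, and the Liouville theorem of \cite{CDL} is not needed here.
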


\begin{proof}
Let $\rho(x)>0$ in $R^n$ and let $u(x)>0$. Assume that
\begin{equation}\label{PM}
Lu(x)=\rho(x)\sqrt{u(x)} \ \ \ in  \ \ R^n
\end{equation}
where $L=(-\Delta)^{\alpha/2}$ is the fractional Laplacian operator on $R^n$ with $0<\alpha<2$. Let $v(x)=\sqrt{u(x)}$.
Then $u=v^2$ and
$$
\rho v=Lu=Lv^2\leq 2vLv, \ \ in \ \ R^n
$$
See \cite{M} for related derivation.
Hence by the equation (\ref{PM}) we get that
$$
\rho\leq 2Lv.
$$
That is to say, $v$ is a supersolution of the equation
\begin{equation}\label{P1}
2Lv=\rho \ \ \ in \ \ R^n.
\end{equation}
By the monotone method we then get a positive solution $\check{v}$ of (\ref{P1}) such that
$$
0<\check{v}(x)\leq \sqrt{u(x)} \ \ \ in \ R^n.
$$

\textbf{Claim}: If $U>0$ is a bounded solution of (\ref{P1}), then there is a positive solution to the equation (\ref{PM}).

For $R>0$, we can get $\phi_R$ to solve the eigenvalue problem
$$
L\phi=\lambda_{1R}\rho\phi \ \ \ in \ \ B_R
$$
and $\phi=0$ in $B_R^c$, which is the complement of the ball $B_R$. We now choose $\epsilon>0$ such that for
$\underline{u}:=\epsilon\phi$, $\lambda_{1R}\sqrt{\underline{u}}\leq 1$. Then
$$
L\underline{u}=\lambda_{1R}\rho\underline{u}\leq \rho\sqrt{\underline{u}}, \ \ \ in \ B_R.
$$

Note that
$$
L(CU)=\frac{1}{2} C\rho\geq \rho \sqrt{CU} \ \ \ in \ R^n
$$
for any fixed $C>0$ such that $\frac{1}{2}\sqrt{C}\geq \sqrt{U}$ in $R^n$. In $B_R$, we may choose $\epsilon>0$ so small that $CU\geq\underline{u}$
in $B_R$. By the monotone method we then get a positive solution $u_R$ to the R-problem
\begin{equation}\label{P2}
Lu=\rho \sqrt{u} \ \ in \ B_R
\end{equation}
with $u=0$ in $B_R^c$. This $u_R$ satisfies $\underline{u}\leq u_R(x)\leq CU(x)$ in $R^n$. By Theorem \ref{uniqueness} we know that $u_R$ is a unique positive solution to (\ref{P2}) and $u_R$ is increasing in the parameter $R$. Let $\hat{u}$ be the point-wise limit of $u_R(x)$. Then $\hat{u}>0$ is a solution to (\ref{PM}).
\end{proof}

From the proof of above result we have actually proved the following result.
\begin{Thm}\label{existence2} Assume that the function de fined $U(x):=\int_{R^n}\frac{\rho(y)}{|x-y|^{n-\alpha}}dy$ is a bounded function in $R^n$. Fix $\sigma\in (0,1)$. Then the problem
$$
(-\Delta)^{\alpha/2}u=\rho(x)u^\sigma \ \ \ in \ R^n
$$
has a bounded positive solution in $R^n$.
\end{Thm}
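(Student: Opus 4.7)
The strategy is to redo the \textbf{Claim} portion of the proof of Theorem \ref{existence}, the only difference being that the exponent $1/2$ is replaced everywhere by $\sigma\in(0,1)$. The supersolution step already uses $U$ itself: after normalizing so that $(-\Delta)^{\alpha/2}U=\rho$, I compute $L(CU)=C\rho$ and ask for $C\rho\geq\rho(CU)^\sigma$, which reduces to $C^{1-\sigma}\geq U(x)^\sigma$ pointwise. Because $U$ is bounded by hypothesis, any $C\geq\|U\|_\infty^{\sigma/(1-\sigma)}$ works, and $CU$ is a bounded supersolution on all of $R^n$.

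For a subsolution on each ball $B_R$, I use the first Dirichlet eigenpair $(\lambda_{1R},\phi_R)$ of $L\phi=\lambda\rho\phi$ in $B_R$ (with $\phi=0$ on $B_R^c$) and set $\underline{u}=\epsilon\phi_R$. Then $L\underline{u}=\lambda_{1R}\rho\underline{u}\leq\rho\underline{u}^\sigma$ provided $\lambda_{1R}\underline{u}^{1-\sigma}\leq 1$, and a small enough $\epsilon>0$ also guarantees $\underline{u}\leq CU$ in $B_R$. The monotone iteration between $\underline{u}$ and $CU$ then produces a solution $u_R$ of
\begin{equation*}
(-\Delta)^{\alpha/2}u=\rho u^\sigma \ \text{in} \ B_R,\qquad u=0 \ \text{on} \ B_R^c,
\end{equation*}
with $\underline{u}\leq u_R\leq CU$.

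Since $f(t)=t^\sigma$ trivially satisfies $\overline{\lim}\,t^{-\sigma}f(t)=1$, Theorem \ref{uniqueness} gives uniqueness of $u_R$. Uniqueness is exactly what is needed to conclude, via the usual sub/supersolution comparison, that $u_R$ is increasing in $R$: the restriction of $u_{R'}$ to $B_R$ is an admissible supersolution for the $R$-problem whenever $R'>R$, and comparing it against $u_R$ forces $u_R\leq u_{R'}$ on $B_R$. Letting $\hat{u}(x)=\lim_{R\to\infty}u_R(x)$, the monotone limit is bounded above by $CU$, stays positive on compacta (since $\underline{u}>0$ locally), and passing to the limit in the integral/distributional form of the equation yields a bounded positive solution of $(-\Delta)^{\alpha/2}u=\rho u^\sigma$ on $R^n$.

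The main obstacle I foresee is the same technical point present already in Theorem \ref{existence}: Theorem \ref{uniqueness} is stated under $\rho\geq c>0$, which need not hold for every $\rho$ with property (H). As in the $\sigma=1/2$ case, one either imposes $\rho>0$ on $R^n$, or, if $\rho$ merely vanishes on a meager set, perturbs by a small positive term and passes to the limit. Nothing else in the argument interacts with $\sigma$ in a nontrivial way; the proof is a faithful transcription of the previous one.
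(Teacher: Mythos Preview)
Your proposal is correct and is precisely the approach the paper intends: the paper does not give a separate proof of Theorem~\ref{existence2} but simply states that it follows from the proof of Theorem~\ref{existence}, i.e., by rerunning the \textbf{Claim} with $\sqrt{u}$ replaced by $u^\sigma$. Your observation about the hypothesis $\rho\geq c>0$ in Theorem~\ref{uniqueness} is a legitimate technical caveat, and it applies equally to the paper's own argument for the case $\sigma=1/2$.
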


As an application of above ansatz, one may easily give a finite time blow up example to the fractional space time porous-media diffusion equation
\begin{equation}\label{fract}
\rho(x) D_{0+}^\beta u(x,t)=(-\Delta)^{\alpha/2}u(x,t)^2 \ \ \ in \ R^n\times [0, T).
\end{equation}
Here $\beta\in (0,1)$ and $D_{0+}^\beta$ is the Riemann-Liouville fractional derivative in $(0,\infty)$.
In fact, we set $u(x,t)=\phi(t)w(x)$, we may get for some positive constant $\lambda$ such that
\begin{equation}\label{time}
D_{0+}^\beta \phi=\lambda \phi^2 \ \ \ t>0
\end{equation}
and
$$
(-\Delta)^{\alpha/2}w(x)^2 =\lambda \rho(x)w(x) \ \ \ in \ R^n.
$$
Using the example constructed in Theorem \ref{existence} above and the fact that any positive solution to (\ref{time}) blows up in finite time, we get many finite time blow up solutions to (\ref{fract}).  The same thing can be done to
the fractional space time porous-media diffusion equation
\begin{equation}\label{fracts}
\rho(x) D_{0+}^\beta u(x,t)=(-\Delta)^{\alpha/2}u(x,t)^{1/\sigma} \ \ \ in \ R^n\times [0, T).
\end{equation}
Here $\sigma\in (0,1)$.

\section{ Liouville type result about semilinear fractional equations}\label{sect4}

In this section we consider the Liouville type result for non-negative solutions to the fractional nonlinear Poisson inequality
\begin{equation}\label{P}
(-\Delta)^{\alpha/2}u(x)\geq \rho(x)u^p \ \ in \ R^n,
\end{equation}
where $\alpha\in (0,n)$, $\rho(x)$ is a nontrivial nonnegative function with reasonable growth at $x=\infty$ and $0<p<\frac{n}{n-\alpha}$. Here, by definition, the non-negative solutions mean that they are in $L_{loc}(R^n)$ and satisfy (\ref{P}) in the distributional sense, i.e.,  for any non-negative $\phi\in C^\infty_0(R^n)$ there holds
$$
\int_{R^n} u(x)(-\Delta)^{\alpha/2}\phi(x)\geq \int_{R^n} \rho(x)u^p\phi(x).
$$

\begin{Thm}\label{liouville}
 Assume that $\rho(x)\geq 1$ in $R^n$. Let $u\geq 0$ be a distributional solution to the problem (\ref{P}). Then $u=0$.
\end{Thm}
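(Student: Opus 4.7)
The plan is to convert the distributional differential inequality (\ref{P}) into a pointwise integral inequality, and then run a Serrin-type bootstrap that iteratively improves the lower bound on $u$ at infinity, until the improved bound forces $u \equiv +\infty$, contradicting $u \in L^1_{loc}$.

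First I would promote (\ref{P}) to the pointwise bound
\begin{equation*}
u(x) \;\geq\; c_{n,\alpha} \int_{R^n} \frac{\rho(y)\, u(y)^p}{|x-y|^{n-\alpha}}\, dy \;\geq\; c_{n,\alpha} \int_{R^n} \frac{u(y)^p}{|x-y|^{n-\alpha}}\, dy,
\end{equation*}
where the second inequality uses $\rho \geq 1$. For the first, I fix $R>0$ and let $w_R$ solve the Dirichlet problem $L w_R = \rho u^p$ in $B_R$ with $w_R = 0$ on $B_R^c$. Since $L(u - w_R) \geq 0$ distributionally in $B_R$ and $u - w_R \geq 0$ on $B_R^c$, the fractional maximum principle yields $u \geq w_R$; passing $R \to \infty$ in the Green-function representation, exactly as in Section \ref{sect2}, produces the displayed inequality.

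Now assume $u \not\equiv 0$ for contradiction. Then some ball $B$ carries positive $u^p$-mass, and restricting the integration to $B$ delivers the initial decay $u(x) \geq c_0 (1+|x|)^{-(n-\alpha)}$ on all of $R^n$. Set $\gamma_0 = n-\alpha$ and $\gamma_{k+1} = p\gamma_k - \alpha$. Inductively feeding $u(x) \geq c_k (1+|x|)^{-\gamma_k}$ back into the integral inequality and invoking the standard Riesz-potential estimate
\begin{equation*}
\int_{R^n} \frac{(1+|y|)^{-p\gamma_k}}{|x-y|^{n-\alpha}}\, dy \;\sim\; (1+|x|)^{\alpha - p\gamma_k} \quad \text{when } \alpha < p\gamma_k < n
\end{equation*}
yields $u(x) \geq c_{k+1} (1+|x|)^{-\gamma_{k+1}}$. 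If instead $p\gamma_k \leq \alpha$ at some step, the integral diverges at infinity and forces $u \equiv +\infty$, the desired contradiction.

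Finally I would check that for every $0 < p < n/(n-\alpha)$ the recursion indeed reaches $p\gamma_k \leq \alpha$ in finitely many steps. For $p \leq 1$ this is immediate, since $\gamma_k$ drops by at least $\alpha$ each step. For $1 < p < n/(n-\alpha)$, the fixed point $\gamma^* = \alpha/(p-1)$ satisfies $\gamma^* > n-\alpha = \gamma_0$ precisely because $p < n/(n-\alpha)$, so $\gamma_k - \gamma^* = p^k (\gamma_0 - \gamma^*) \to -\infty$ and $\gamma_k$ eventually becomes negative. The main technical obstacle is the first step: one must justify the comparison $u \geq w_R$ at the merely distributional regularity of $u$ and check that $w_R$ monotonically converges to the Riesz potential as $R \to \infty$; the framework of Section \ref{sect2} (together with the Chen--D'Ambrosio--Li result \cite{CDL}) is tailored to supply exactly these ingredients.
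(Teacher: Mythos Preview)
Your proposal is correct and follows essentially the same route as the paper: promote the distributional inequality to the pointwise Riesz-potential bound $u(x)\geq c\int_{R^n}|x-y|^{-(n-\alpha)}u(y)^p\,dy$ via comparison with Dirichlet problems on $B_R$, then run the Serrin-type bootstrap $u(x)\geq c_k(1+|x|)^{-\gamma_k}$ with $\gamma_{k+1}=p\gamma_k-\alpha$ (the paper writes this with the opposite sign, $p_{k+1}=pp_k+\alpha$, $p_1=\alpha-n$). The case split $p\leq 1$ versus $1<p<n/(n-\alpha)$ and the fixed-point analysis $\gamma^*=\alpha/(p-1)$ match the paper's treatment.

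One difference worth flagging: the paper's proof goes on to handle the borderline exponent $p=n/(n-\alpha)$ by a separate argument (showing first that $\int_{R^n}u^p<\infty$ and then using an annulus estimate $\int_{B_{2R}\setminus B_R}u^p\geq c\bigl(\int_{B_R}u^p\bigr)^p$ to force $\int u^p=0$). Your bootstrap does not reach a contradiction at this critical value, since there $\gamma_0=n-\alpha$ is exactly the fixed point and the iteration stalls. The hypothesis preceding (\ref{P}) reads $0<p<n/(n-\alpha)$, so your argument suffices for the theorem as stated, but the paper (consistent with its abstract) in fact proves the Liouville result up to and including $p=n/(n-\alpha)$; if you want to match that, you would need to add this endpoint argument.
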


\begin{proof}
Arguing as before, we have for almost every $x\in R^n$,
\begin{equation}\label{recursion}
u(x)\geq c \int_{R^n} \frac{\rho(y)u^p(y)}{|x-y|^{n-\alpha}}dy.
\end{equation}
We then use the well-known argument to show that $u(x)=0$ in $R^n$.

For simplicity, we first assume that $\rho(x)=1$ is the constant function. We shall use $c$ to denote various uniform constants from line to line.

Assume $u(x)$ is non-trivial in the ball $B_1$. Then we have
$$
u(x)\geq \int_{B_1}\frac{u^p}{|x-y|^{n-\alpha}}dy
$$
which implies that
$$
u(x)\geq \frac{C}{(1+|x|)^{n-\alpha}},
$$
where $C=\int_{B_1}u^p$.
We Choose $R>1$ large such that for all $|x|\geq R$,
$$
u(x)\geq \frac{C}{|x|^{n-\alpha}}.
$$
Let $D_x=\{|x-y|\leq |x|/2\}$. On $D_x$, we have $|x|/2\leq |y|\leq 3|x|/2$,
$$
u(y)\geq \frac{C}{|y|^{n-\alpha}}\geq c|x|^{\alpha-n}.
$$
Using (\ref{recursion}) we know that
$$
u(x)\geq \int_{D_x}\frac{u^p(y)}{|x-y|^{n-\alpha}}dy\geq c|x|^{\alpha-n}\int_{D_x}u^(y)dy
$$
and then for $p_1:=\alpha-n$,
$$
u(x)\geq c|x|^{pp_1+\alpha}.
$$
Let $p_2=pp_1+\alpha$. Repeat the above argument $k$-steps we have
\begin{equation}\label{key4}
u(x)\geq c|x|^{p_{k+1}},
\end{equation}
where $p_{k+1}=pp_k+\alpha$.
By induction we know that
$$
p_{k+1}=p^kp_1+\alpha(1+p+...+p^{k-1}).
$$
For $p=1$, we have
$p_{k+1}=p_1+k\alpha>0$ for some $k>1$.

Note that for $p\in (0,1)$, we have
$$
p_{k+1}=p^kp_1+\frac{\alpha(1-p^k)}{1-p}\to \frac{\alpha}{1-p}>0.
$$
Hence in finite steps we have
$p_{k}>0$.

Note that for $p\in (1, \frac{n}{n-\alpha})$, we have
$$
p_{k+1}=p^kp_1+p^k\frac{\alpha(1-m^k)}{1-m}=p^k[p_1+\frac{\alpha(1-m^k)}{1-m}]
$$
and as $k\to\infty$,
$$
p_1+\frac{\alpha(1-m^k)}{1-m}\to p_1+\frac{\alpha}{1-p}>0.
$$
Hence in finite steps we have
$p_{k}>0$.
In any case, by (\ref{key4}) and for $|x-y|\geq 2|x|$ we have
$$
|y|\geq |x-y|-|x|\geq |x-y|/2
$$
and
$$
u(x)\geq \int_{\{|x-y|\geq 2|x|\}} \frac{|y|^{pp_k}}{|x-y|^{n-\alpha}}dy=\infty,
$$
which is a contradiction to the fact $u\in L_{loc}(R^n)$.

For the case $p=\frac{n}{n-\alpha}=-\frac{n}{p_1}$ (and $pp_1=-n$), we have
$$
u(x)\geq c(R+|x|)^{p_1}\int_{B_R}u^p.
$$
Then we have
$$
\int_{B_R}u(x)^pdx\geq c\int_{B_R}(R+|x|)^{pp_1}dx(\int_{B_R}u^p)^p.
$$
Note that
$$\int_{B_R}(R+|x|)^{pp_1}dx=\int_{B_R}(R+|x|)^{-n}dx=c_1
$$
which is independent of $R$. Hence
$$
\int_{R^n}u(x)^pdx<\infty.
$$
Let $T_R=B_{2R}-B_R$. Then arguing as before,
$$
\int_{T_R}u(x)^pdx\geq c\int_{T_R}(R+|x|)^{-n}dx(\int_{B_R}u^p)^p=c_2(\int_{B_R}u^p)^p.
$$
Sending $R|to\infty$, we get that the left side goes to zero and then
$$
\int_{R^n}u(x)^pdx=0
$$
and $u=0$.

This completes the proof.
\end{proof}

We give two remarks. One is that
we may argue in the case $p=\frac{n}{n-\alpha}$ by the same method as in the case $1<p<\frac{n}{n-\alpha}$.
For $|x|\geq R/2$, we have
\begin{equation}\label{star}
u(x)\geq c|x|^{p_1}\int_{B_R}u^p.
\end{equation}
Then
$$
\int_{B_R}u(x)^p\geq c^p (\int_{B_R}u^p)^p\int_{B_R} dx\int_{R^n}\frac{(R+|y|)^{pp_1}}{|x-y|^{n-\alpha}}dy.
$$
This implies that
\begin{equation}\label{key5}
1\geq c^p (\int_{B_R}u^p)^{p-1}\int_{B_R} dx\int_{\{|y|\geq R/2\}}\frac{(R+|y|)^{-n}}{|x-y|^{n-\alpha}}dy
\end{equation}
Let
$$
I=\int_{B_R} dx\int_{\{|y|\geq R/2\}}\frac{(R+|y|)^{-n}}{|x-y|^{n-\alpha}}dy.
$$
Then
$$
I\geq cR^n\int_{\{|y|\geq R/2\}}\frac{(R+|y|)^{-n}}{(R+|y|)^{n-\alpha}}dy=c_0>0.
$$
Then from (\ref{key5}) we know that
$$
\int_{R^n}u^p<\infty.
$$
Then by (\ref{star}), we have
$$
\int_{T_R} u^p\geq  c^p (\int_{B_R}u^p)^p\int_{T_R} dx\int_{R^n}\frac{(R+|y|)^{-n}}{|x-y|^{n-\alpha}}dy.
$$
Compute the right side of above inequality as before we get that
$$
\int_{T_R} u^p\geq c(\int_{B_R}u^p)^p.
$$
Sending $R\to \infty$, we know that the left side approaches zero and we conclude that
$$
\int_{R^n}u^p=0.
$$
Then $u=0$.

The other is that the above argument works for $\rho(x)\geq (1+|x|)^{-\theta}$ with $\theta\in (0,\alpha^2/n)$ for $\alpha\in (0,2)$. We omit the detail.

\section{ Fujita exponent for nonlinear fractional Laplacian heat equation}\label{sect5}

Assume that $f:R_+\to R_+$ is a locally Lipschitz convex function with the derivative condition $f'(0)=0$. Let $T>0$.
Assume that $u=u(x,t)$ is a nonnegative mild solution to the nonlocal nonlinear fractional diffusion
\begin{equation}\label{evolution}
(\partial_t+L)u=f(u), \ \ in \ \ R^n\times (0,T)
\end{equation}
with non-trivial nonnegative Cauchy data $u(x,0)=u_0(x)$. The fractional operator $L$ is as in last section. In short we let $u(t)=u(x,t)$ when it is considered as a element in some Banach space. Let $0\leq t<T$ and
Let $G(x,y,t):=G_\alpha(x-y,t$ be the fundamental solution of the operator $\partial_t+L$ and define
$$
e^{-tL}w(x)=\int_{R^n}G(x,y,t)w(y)dy.
$$
Recall that the above expression is well-defined for $w\in L^1\bigcap L^\infty(R^n)$.

Let $\sigma=\alpha/2$. We now recall some standard fact about the kernel function $G_\alpha(x,t)$ \cite{BG}. We know that $G_\alpha(x,t)>0$ for $t>0$ and
$$
\int_{R^n}G_\alpha(x,t)dx=1.
$$
For any $s>0$, $t>0$, we have
$$
G_\alpha(\cdot,t)*G_\alpha(\cdot,s)=G_\alpha(\cdot,t+s).
$$
We have
$$
G_\alpha(x,t)=t^{-\frac{n}{\alpha}}G_\alpha(t^{-\frac{n}{\alpha}}x,1)
$$
and
$$
\lim_{|x|\to\infty}|x|^{n+\alpha}G_\alpha(x,1)=c_{n,\alpha}>0.
$$
By the last fact we can see that
there is dimensional constant $B=B_{n,\alpha}$ such that
\begin{equation}\label{asym}
\frac{B^{-1}}{t^{\frac{n}{\alpha}}(1+|t^{-\frac{1}{\alpha}}x|^2)^{\frac{n+\alpha}{2}}}\leq G_\alpha(x,t)\leq \frac{B}{t^{\frac{n}{\alpha}}(1+|t^{-\frac{1}{\alpha}}x|^2)^{\frac{n+\alpha}{2}}}
\end{equation}
In the particular case when $\alpha=1$, we have
$$
G_1(x,t)=\frac{B}{t^{n}(1+|t^{-1}x|^2)^{\frac{n+1}{2}}}=\frac{Bt}{(t^2+|x|^2)^{\frac{n+1}{2}}},
$$
where $B=\Gamma(\frac{n+1}{2})/\pi^{\frac{n+1}{2}}$. This is the standard Poisson kernel \cite{St}. 
Note that for any non-negative function $v$ and $R>0$,
$$
G_\alpha(\cdot,t)*v(x)=\int_{R^n}G_\alpha(x-y,t)v(y)dy\geq \int_{B_R}G_\alpha(x-y,t)v(y)dy.
$$
Then,
\begin{equation}\label{asym2}
\underline{\lim}_{t\to\infty}t^{\frac{n}{\alpha}}G_\alpha(\cdot,t)*v=\underline{\lim}_{t\to\infty}G_\alpha(t^{-\frac{n}{\alpha}}\cdot,1)\geq G(0,1)\int_{B_R}v
\end{equation}
for any ball $B_R$.
In fact with a little more effort one may show
that up to a constant scalar,
\begin{equation}\label{asym0}
\lim_{t\to\infty}t^{\frac{n}{\alpha}}G_\alpha(\cdot,t)*v=|v|_{L^1(R^n)}.
\end{equation}

We now use a trick from Weissler \cite{W81}.

Note that we have following expression for the solution $u(t)$ of (\ref{evolution})
$$
u(t)=e^{-tL}u_0+\int_0^te^{-(t-s)L}f(u(s))ds.
$$
Introduce the parameter $\tau>t$ and define
$$
H(x,t)=e^{-\tau L}u_0+\int_0^tf(e^{-(\tau-s)L}u(s))ds.
$$
Clearly that $H(x,0)=e^{-\tau L}u_0>0$.
Note that
$$
e^{-(\tau-t)L}u(t)=e^{-\tau L}u_0+\int_0^te^{-(\tau-s)L}f(u(s))ds.
$$
From Jensen's inequality we have
$$
e^{-(\tau-s)L}f(u(s))\geq f(e^{-(\tau-s)L}u(s)).
$$
Then we have
$$
e^{-(\tau-t)L}u(t)\geq H(x,t).
$$

Clearly we have
$$
H(x,t)\leq e^{-\tau L}u_0+\int_0^\tau f(e^{-(\tau-s)L}u(s))ds=u(x,\tau).
$$
For fixed $x\in R^n$ and using $f'\geq 0$ on $[0\infty)$, we have
\begin{equation}\label{star2}
\partial_tH(x,t)=f(e^{-(\tau-t)L}u(t))\geq f(H(x,t)).
\end{equation}

We now consider the special case when $f(u)=u^p$ with $p>1$.
The inequality (\ref{star2}) gives us
$$
\partial_tH(x,t)\geq H(x,t)^p.
$$
Integrating in $t$, we have
$$
H(x,0)^{1-p}\geq H(x,\tau)^{1-p}+(p-1)\tau\geq (p-1)\tau
$$
and then the very useful an priori bound
\begin{equation}\label{star3}
(p-1)^{1/(p-1)}\tau^{1/(p-1)} H(x,0)\leq 1.
\end{equation}
Recall that $H(x,0)=H(x,0)=e^{-\tau L}u_0(x)$. Then (\ref{star3}) implies that
\begin{equation}\label{star4}
(p-1)^{1/(p-1)}\tau^{1/(p-1)}|e^{-\tau L}u_0|_{L^\infty(R^n)} \leq 1.
\end{equation}
This is a very interesting trick since we use nonlinear equation to get exact $L^\infty$ bound for the Cauchy problem of the linear fractional evolution equation
\begin{equation}\label{evolution2}
(\partial_t+L)u=0, \ \ in \ \ R^n\times (0,T)
\end{equation}
with nonnegative Cauchy data $u(x,0)=u_0(x)$.

The following result asserts that the exponent $p_F:=\frac{n+\alpha}{n}$ is the Fujita exponent to the evolution problem (\ref{evolution3}).

\begin{Thm}\label{fujita}
Assume that $1<p\leq 1+\frac{\alpha}{n}$. Then there is no global solution to
the nonlocal nonlinear fractional diffusion
\begin{equation}\label{evolution3}
(\partial_t+L)u=u^p, \ \ in \ \ R^n\times (0,T)
\end{equation}
with non-trivial nonnegative Cauchy data $u(x,0)=u_0(x)$.
\end{Thm}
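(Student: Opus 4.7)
The plan is to assume, for contradiction, that a global nonnegative mild solution $u$ with $u_0 \not\equiv 0$ exists, and to push the already established pointwise bound \eqref{star4} to $\tau \to \infty$, controlling $e^{-\tau L}u_0$ from below by the large-time heat-kernel asymptotic \eqref{asym2}. Since the flow is well-posed in $L^1 \cap L^\infty$, we may take $u_0 \in L^1 \cap L^\infty$ with $\|u_0\|_{L^1} > 0$; then for any fixed $x_0 \in R^n$,
$$\liminf_{\tau \to \infty}\, \tau^{n/\alpha}\, e^{-\tau L}u_0(x_0) \;\ge\; c_0\,\|u_0\|_{L^1}$$
for a positive dimensional constant $c_0$.

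For the subcritical range $1 < p < 1+\alpha/n$ one has $1/(p-1) > n/\alpha$, so combining the preceding estimate with \eqref{star4} at $x_0$ gives
$$c_0\,\|u_0\|_{L^1}\,\tau^{\,1/(p-1) - n/\alpha} \;\le\; (p-1)^{-1/(p-1)}$$
for all large $\tau$, whose left-hand side diverges. This contradiction settles the subcritical case.

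The critical case $p = p_F = 1+\alpha/n$ is the subtle one: now the two exponents coincide and \eqref{star4} only delivers the finite bound $c_0\,\|u_0\|_{L^1} \le (p-1)^{-n/\alpha}$. I would proceed in two steps. First, time-shift: for every $t_0 > 0$ the function $v(x,t) := u(x, t+t_0)$ is again a global mild solution, now with datum $u(\cdot, t_0)$, so the same bound yields $\|u(\cdot, t_0)\|_{L^1} \le C_0$ uniformly in $t_0$. Second, test the equation against $1$: since $\int_{R^n}(-\Delta)^{\alpha/2}u\,dx = 0$ (in Fourier, the multiplier $|\xi|^\alpha$ vanishes at the origin), one obtains the mass identity
$$\|u(t)\|_{L^1} - \|u_0\|_{L^1} \;=\; \int_0^t \|u(s)\|_{L^p}^p\,ds,$$
so $\|u(t)\|_{L^1}$ is non-decreasing and uniformly bounded, which forces $\int_0^\infty \|u(s)\|_{L^p}^p\,ds < \infty$. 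On the other hand, Duhamel and positivity give $u(t) \ge e^{-tL}u_0$, and the Gaussian-type lower bound in \eqref{asym} yields $e^{-tL}u_0(x) \ge c\,t^{-n/\alpha}\|u_0\|_{L^1}$ on a ball of radius comparable to $t^{1/\alpha}$, so
$$\|u(t)\|_{L^p}^p \;\ge\; c'\,\|u_0\|_{L^1}^p\, t^{-n(p-1)/\alpha} \;=\; \frac{c'\,\|u_0\|_{L^1}^p}{t}$$
when $p = p_F$. Since $\int^\infty dt/t = \infty$, this contradicts the integrability derived from the mass identity.

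The main obstacle is precisely the critical case: \eqref{star4} is saturated there, and extracting a contradiction requires packaging the time-shift trick, the mass-growth identity, and the sharp Gaussian lower bound on $G_\alpha$ together in order to convert a borderline finite estimate into a non-integrable lower bound for $\|u(t)\|_{L^p}^p$.
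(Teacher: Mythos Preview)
Your proof is correct and follows the same route as the paper: identical treatment of the subcritical range via \eqref{star4} and \eqref{asym2}, and in the critical case the same combination of a uniform $L^1$ bound (obtained by time-shift) with the kernel lower bound $u(t)\ge e^{-tL}u_0$ to produce $\|u(t)\|_{L^p}^p\gtrsim 1/t$ and hence a divergent time integral contradicting that bound. The only cosmetic difference is that you phrase the $L^1$ growth as a mass identity from testing the equation against $1$, whereas the paper reads it directly off the $x$-integrated Duhamel formula; for mild solutions the Duhamel version is the rigorous one, but the two are the same computation.
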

\begin{proof} Assume that $p\in (1,p_F)$. Since
$$
\frac{n}{\alpha}<1/(p-1),
$$
 the estimate (\ref{star4}) implies that
$$
\lim_{t\to\infty}t^{\frac{n}{\alpha}}|G_\alpha (\cdot, t)u_0|_{L^\infty(R^n)}=0,
$$
which contradicts with (\ref{asym2}).

  Let $p=p_F$. We denote by $G_t(\cdot)=G_\alpha(\cdot,t)$ and $P_tu=G_\alpha(\cdot,t)*u$. Using the semigroup property of $G_t$ we know that for any $t_0>0$ and $t>0$,
  $$
  u(t+t_0)=P_tu(t_0)+\int_0^tP_{t-s}u(s+t_0)^pds.
  $$
Since $p-1=\frac{\alpha}{n}$, by (\ref{asym2}) and (\ref{star4}) we know that $u(t)$ is in $L^1(R^n)$ and there is a uniform constant $C>0$ such that
$$
|u(t)_{L^1(R^n)}\leq C.
$$
We shall see that this is impossible for $(p-1)n=\alpha$ via a direct computation.

Using
$$
|t^{-\frac{1}{\alpha}}(x-y)|^2\leq 2(|t^{-\frac{1}{\alpha}}x|^2+|t^{-\frac{1}{\alpha}}y|^2),
$$
we know from (\ref{asym}) that for $$
k(t)=c_0\int_{R^n}\frac{B^{-1}}{(1+|z|^2)^{\frac{n+\alpha}{2}}}u_0(t^{\frac{1}{\alpha}}z)dzt^{\frac{n}{\alpha}}
$$ with $c_0>0$ being a uniform constant,
$$
u(x,t)\geq P_t u_0(x)\geq G_{t}(x)k(t).
$$
Then $k=k(1)$,
$$
u(x,1)\geq kG_1(x)
$$
and for any $s>0$,
$$
u(s+1)\geq G_s*u(1)\geq k(1)G_s*G_1=kG_{s+1}.
$$
By (\ref{asym}) and a direct computation we know that there is a uniform constant $C_2>0$ such that
$$
|G_{s+1}^p|_{L^1(R^n)}\geq \frac{C_2}{s+1}.
$$
Then,
\begin{align*}
|u(t+2)|_{L^1(R^n)}&\geq \int_0^t|G_{t-s}*u^p(s+2)|_{L^1(R^n)}ds \\
&\geq \int_0^t|G_{t-s}*(k(s+2)G_{s+2})^p|_{L^1(R^n)}ds \\
&=\int_0^tk(s+2)^p|G_{s+2}^p|_{L^1(R^n)}ds\\
&\geq C_2\int_0^t(s+1)^{-1}ds\to\infty
\end{align*}
as $t\to\infty$.
This is absurd.
This completes the proof of Theorem \ref{fujita}.
\end{proof}

\section{Conclusions}\label{sect6}
All results in our paper are new. We have
made the Fujita exponent for the space-diffusion nonlinear fractional evolution problem (\ref{diffusion2}). Since in the $\alpha$-Laplacian operator case, the kernels behavior very diffrent from the standard Gaussian kernel, the result is surprising. Though we have followed the idea from Weissler's paper \cite{W81}, the details are different in essential. We have developed the Liouville theorem for (\ref{ell}) when $0<\alpha<2$ and $0<p<\frac{n}{n-\alpha}$ and the details we have presented are very readable for applied analysts, physicist, and engineers. As is well-known, One can use blow-up argument in \cite{PQS} and Liouville type results to get an priori bounds for positive solutions of related nonlinear fractional elliptic problems in bounded domains such as
\begin{equation}\label{ell2}
(-\Delta)^{\alpha/2} u=\rho(x)u^p+f(x) \ \ \ in \ \ \Omega\subset \subset R^n,
\end{equation}
where $0<p\leq p_{sg}$.

We think that our existence result about the problem
\begin{equation}\label{beta}
(-\Delta)^{\alpha/2} u=\rho(x)u^\beta, \ \ \ in \  R^n,
\end{equation}
where $\beta=1/2$, is a beginning point for a general study of the case when $\alpha\in (0,1)$. Assume that $f=0$ in (\ref{diffusion}) and $h(u)=u^\beta$ with $\beta>0$. Let $u(x,t)=w(x)\phi(t)$. We are led to
\begin{equation}\label{beta2}
(-\Delta)^{\alpha/2} w=\lambda w^\beta, \ \ in \ \ R^n
\end{equation}
and
$$
D_{0+}^\tau \phi(t)^\beta=-\lambda \phi(t), \ \ t>0
$$
for some real constant $\lambda$. equation (\ref{beta}) is a special case of (\ref{beta2}) when $\lambda=1$. For arbitrary $\beta\in (0,1)$, we know very few result about about the nonlocal nonlinear eigenvalue problem (\ref{beta2}).

One may also consider related existence result about (\ref{ell}) with the nonlocal nonlinear term $f(u)=(K*u^2)u$ just as in the Choquard-Hartree equation \cite{MZ} and we leave it open. One may also consider finite blow up for space-fractional diffusion half-space with nonlinear boundary condition \cite{Hu} or the blow up rate for space-fractional diffusion on bounded domain \cite{FS}.

\end{document}